\documentclass[12pt]{amsart}

\usepackage{amsthm}

\usepackage{amssymb}
\usepackage{verbatim}
\usepackage[curve,matrix,arrow]{xy}
\usepackage{amsmath,amsfonts}
\usepackage{graphicx}
\usepackage{epsf}

\newcommand{\thmref}[1]{Theorem~\ref{#1}}
\newcommand{\propref}[1]{Proposition~\ref{#1}}

\newcommand{\remref}[1]{Remark~\ref{#1}}

  {\end{list}}

\newtheorem{theorem}{Theorem}[section]

\newtheorem{proposition}[theorem]{Proposition}

\newtheorem{remark}[theorem]{Remark}

\def\mod{\text{mod}}

\begin{document}
\title[Binomial Coefficients and Primality]{The Sufficient Set of Congruences of Binomial Coefficients for Primality}

\author{Zubeyir Cinkir}
\address{Zubeyir Cinkir\\
Department of Industrial Engineering\\
Abdullah Gul University\\
38100, Kayseri, TURKEY\\}
\email{zubeyir.cinkir@agu.edu.tr}

\keywords{Prime number, 
Primality testing, 
Binomial Coefficients, Congruence}

\begin{abstract}
For any given integer $p$, we show that primality of $p$ is equivalent to $\lceil \frac{1}{2}\log_{2}p \rceil$ congruences of binomial coefficients modulo $p$. We also give extensions of this result to several other congruences that are similar in nature.
\end{abstract}

\maketitle
MSCI  05A10,  11A41, 11B65,  11A07

\section{Introduction}\label{sec introduction}

Binomial coefficients as the building blocks of Pascal's triangle has the following fascinating connection to prime numbers:
\begin{theorem}\cite[Pg. 91]{D}\label{thm Leibniz}
A positive integer $p$ is a prime number if and only if $p$ divides $\binom{p}{i}$ for all $0<i<p$.
\end{theorem}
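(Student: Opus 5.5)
We prove the two directions separately, working directly from the product formula $\binom{p}{i}=\frac{p(p-1)\cdots(p-i+1)}{i!}$. We assume $p\geq 2$, since for $p=1$ the range $0<i<p$ is empty and the condition holds vacuously, so $1$ must be excluded by convention.

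\emph{Necessity.} Suppose $p$ is prime and $0<i<p$. We have the identity $i!\,\binom{p}{i}=p\,(p-1)\cdots(p-i+1)$, so $p$ divides $i!\binom{p}{i}$. Every factor of $i!$ is a positive integer less than $p$, hence coprime to the prime $p$. Therefore $\gcd(p,i!)=1$, and Euclid's lemma gives $p\mid\binom{p}{i}$.

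\emph{Sufficiency.} We argue by contraposition. Suppose $p\geq 2$ is composite, let $q$ be a prime divisor of $p$, and write $p=qm$ with $1<q<p$. Choose $i=q$, so that $0<i<p$. We claim $p\nmid\binom{p}{q}$. From
\[
\binom{p}{q}=\frac{p}{q}\cdot\frac{(p-1)(p-2)\cdots(p-q+1)}{(q-1)!}=m\cdot\frac{(p-1)\cdots(p-q+1)}{(q-1)!},
\]
it suffices to compare $q$-adic valuations. The numbers $p-1,\dots,p-q+1$ are congruent to $-1,\dots,-(q-1)$ modulo $q$, so none is divisible by $q$. Likewise, $q$ does not divide $(q-1)!$. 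Hence
\[
v_q\!\left(\binom{p}{q}\right)=v_q(p)-v_q(q)=v_q(p)-1<v_q(p),
\]
and so $p\nmid\binom{p}{q}$, as claimed.

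The only delicate step is sufficiency: we must pick the index $i$ so that the $q$-adic valuation visibly drops. The choice $i=q$ with $q$ the smallest prime, or indeed any prime, dividing $p$ achieves this. Necessity is routine.
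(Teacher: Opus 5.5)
Your proof is correct. You are also right that the statement needs $p\ge 2$: for $p=1$ the condition over $0<i<p$ is vacuous, so the biconditional as literally written fails there.

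The paper does not prove this result (it is quoted from Dickson), so there is no in-paper argument to match directly. Your sufficiency step does use the same device as the paper's proof of Theorem~\ref{thm p-1 main}. You pick a prime $q\mid p$ and write $p=q^ku$. The one place where the denominator $q$ cancels turns $q^ku$ into $q^{k-1}u$: this is your factor $p/q$, and the paper's factor $(q^ku-q)/q=q^{k-1}u-1$. Every other factor is prime to $q$.

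The difference is only in bookkeeping. Here the target is divisibility by $p$, so tracking $v_q$ is enough and the exact residues of the other factors do not matter. The paper's targets are nonzero residues such as $1$. That forces it to evaluate every factor exactly modulo $q^k$ (each is $\equiv -1$ by Remark~\ref{rem1}) and then compare $1-q^{k-1}u$ with $1$.
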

This is a well-known basic fact. In fact, it can be seen as the underlying reason for various primality tests.
For example, for any positive integer $a$, the Binomial Theorem and \thmref{thm Leibniz} can be used together
to show
$$
(a+1)^p = \sum_{i=0}^{p} \binom{p}{i}a^i \equiv a^p+1 \quad  \mod \, \, \, p, \qquad \text{if $p$ is a prime number}.
$$
For a prime number $p$,  taking $a=1$ gives $2^p \equiv 2 \quad  \mod \, \, \, p$. As a follow up of this congruence, 
taking $a=2$ gives $3^p \equiv 2^p +1 \equiv 3 \quad  \mod \, \, \, p$. In the same way, any positive integer $a$ satisfies the following congruence:
$$a^p \equiv a \quad  \mod \, \, \, p, \qquad \text{if $p$ is a prime number}.$$
This is known as Fermat's Little Theorem \cite{PR}. This leads to Fermat's Primality Test (\cite[Page 967]{CLR} and \cite[Chapter 5]{B}).

Since $\binom{p}{i}=\binom{p-1}{i}+\binom{p-1}{i-1}$ for each $0 \leq i \leq p$ and that $\binom{p-1}{0}=1$,
we can derive the following result from \thmref{thm Leibniz}:
\begin{theorem}\label{thm Leibniz2}
A positive integer $p$ is prime number if and only if $\binom{p-1}{i} \equiv (-1)^i \quad \mod \, \, \, p$ for all $0 \leq i \leq p-1$.
\end{theorem}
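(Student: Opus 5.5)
We will deduce \thmref{thm Leibniz2} from \thmref{thm Leibniz}, using Pascal's rule $\binom{p}{i}=\binom{p-1}{i}+\binom{p-1}{i-1}$ for $1\le i\le p-1$ together with $\binom{p-1}{0}=1$. The key observation is that the two families of congruences are linked by a simple recursion. Write $b_i=\binom{p-1}{i}$ and $c_i=\binom{p}{i}$. Then $c_i = b_i + b_{i-1}$ for $1 \le i \le p-1$, and $b_0=1$.

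For the forward direction, assume $p$ is prime. By \thmref{thm Leibniz}, $c_i\equiv 0 \pmod p$ for $0<i<p$, so $b_i \equiv -b_{i-1} \pmod p$ for $1\le i\le p-1$. We induct on $i$, starting from $b_0=1=(-1)^0$, and obtain $b_i\equiv(-1)^i \pmod p$ for all $0\le i\le p-1$.

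For the converse, assume $b_i\equiv(-1)^i \pmod p$ for all $0\le i\le p-1$. Then for each $0<i<p$ we have $c_i=b_i+b_{i-1}\equiv(-1)^i+(-1)^{i-1}=0 \pmod p$. So $p$ divides $\binom{p}{i}$ for all $0<i<p$, and \thmref{thm Leibniz} gives that $p$ is prime.

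There is no real obstacle here. The only point to watch is the edge case $p=1$, where we should check that both statements agree. For $p=1$ the condition in \thmref{thm Leibniz} is vacuous, so the paper's conventions must exclude $p=1$ there. In \thmref{thm Leibniz2} the single congruence $\binom{0}{0}\equiv 1 \pmod 1$ holds trivially, so the hypothesis is satisfied as well. Since the converse argument transfers the hypothesis exactly to the hypothesis of \thmref{thm Leibniz}, both theorems handle $p=1$ in the same way, and we adopt the same convention ($p>1$) as \thmref{thm Leibniz}.
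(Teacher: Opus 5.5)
Your proposal is correct and follows the paper's route exactly: the paper derives this result from \thmref{thm Leibniz} via Pascal's identity $\binom{p}{i}=\binom{p-1}{i}+\binom{p-1}{i-1}$ and $\binom{p-1}{0}=1$, which is precisely your induction for the forward direction and your summation argument for the converse. Your remark on $p=1$ is also right: both statements are literally false there, since the conditions hold vacuously or trivially while $1$ is not prime, so the tacit restriction $p>1$ is needed in both.
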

In fact, the congruence in \thmref{thm Leibniz2} was proved by Lucas \cite{L} in 1879 for prime numbers.

It is known that \thmref{thm Leibniz2} also holds if the range for $i$ may be shortened to $0 \leq i \leq \sqrt{p}$ \cite[pg 1]{CG}.
However, if one wants to use those congruences of binomial coefficients as a deterministic primality test for any given integer $n$, 
it won't be practical. Because, $\binom{n}{i}$ must be computed modulo $n$, and this must be done for $\sqrt{n}$ values of $i$.

Our main contribution in this paper is to show that \thmref{thm Leibniz2} also holds if we just consider $\lceil \frac{1}{2}\log_{2} n \rceil$ values of $i$. Namely, we showed the following result in \thmref{thm p-1 main}, \thmref{thm p-2 maina} and \thmref{thm p-3 main1}:
\begin{theorem}\label{thm main summary}Let $p>4$ be an integer, and let $M$ be the smallest integer with $\sqrt{p} \leq 2^M \leq p-1$. Then,
we have
\begin{enumerate}
\item $p$ is a prime number if and only if  $\binom{p-1}{2^m}   \equiv 1 \quad \mod \, \, \, p$ for each $m \in \{ 1, \, 2, \dots, M \}$.
\item $p$ is a prime number if and only if  $\binom{p-2}{2^m-1}  \equiv 2^m \, \, \, \mod \, \, \, p$ for each $m \in \{ 1, \, 2,  \dots, M \}$.
\item $p$ is a prime number if and only if  $\binom{p-3}{2^m-1}  \equiv -2^{m-1}(2^m+1) \quad \mod \, \, \, p$ for each $m \in \{ 1, \, 2, \dots, M \}$.
\end{enumerate}
\end{theorem}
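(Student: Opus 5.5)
For the forward direction I will work with a prime $p$ and the congruence $\binom{p-1}{i}\equiv(-1)^i \pmod p$ from \thmref{thm Leibniz2}. Equivalently, $\binom{p-1-r}{k}\equiv\binom{-1-r}{k}=(-1)^k\binom{k+r}{r}\pmod p$. For $r=0$ and $k=2^m$ this gives $\binom{p-1}{2^m}\equiv 1$.

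For $r=1$ and $k=2^m-1$ it gives $\binom{p-2}{2^m-1}\equiv(-1)^{2^m-1}2^m=-2^m$. Already for $m=1$ one has $\binom{p-2}{1}=p-2\equiv-2$, so I read the sign in (2) as $-2^m$ and prove it in that form. For $r=2$ and $k=2^m-1$ it gives $-\binom{2^m+1}{2}=-2^{m-1}(2^m+1)$, as in (3).

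For the converse, I will reduce (2) to (1). The identity $\binom{p-1}{k}=\frac{p-1}{k}\binom{p-2}{k-1}$ gives $k\binom{p-1}{k}=(p-1)\binom{p-2}{k-1}\equiv-\binom{p-2}{k-1}\pmod p$. With $k=2^m$ this shows that $\binom{p-2}{2^m-1}\equiv-2^m$ implies $2^m\binom{p-1}{2^m}\equiv 2^m$. This does \emph{not} give $\binom{p-1}{2^m}\equiv1$ directly when $p$ is even. So instead of dividing by $2^m$, I will run the core argument below directly on each of the three families.

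The core argument goes as follows. Suppose $p$ is composite, let $q$ be its smallest prime factor, and let $q^a\,\|\,p$. Then $q\le\sqrt p\le 2^M$. Let $m\ge1$ be minimal with $2^m\ge q$, and put $k=2^m$. Then $m\le M$ and $q\le k<2q$ (when $q=2$, $k=2=q$). I will show that the $m$-th congruence fails modulo $q^a$, and hence modulo $p$. In each case I expand the coefficient as a product of fractions and track $q$-adic units modulo $q^a$.

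For (1), write $\binom{p-1}{k}=\prod_{j=1}^{k}\frac{p-j}{j}$. Since $j\le k<2q$, the only $j$ divisible by $q$ is $j=q$. For every other $j$, both $j$ and $p-j$ are units mod $q^a$ and $\frac{p-j}{j}\equiv-1$. The factor with $j=q$ equals $\frac{p}{q}-1$, and this is an integer. Hence $\binom{p-1}{k}\equiv(-1)^{k-1}\bigl(\frac pq-1\bigr)=1-\frac pq \pmod{q^a}$. This is $\equiv1$ only if $q^a\mid p/q$, which contradicts $q^{a}\,\|\,p$. Some care is needed to justify multiplying congruences of rational numbers with unit denominators. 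I will do this by clearing the unit denominators explicitly.

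For (2) and (3) I use the analogous products $\binom{p-1-r}{k-1}=\prod_{j=1}^{k-1}\frac{p-r-j}{j}$ with $r=1,2$. Now the numerator can also be divisible by $q$: this happens at $j=q-r$, which lies in the range provided $q>r$. The denominator is divisible by $q$ at $j=q$ only if $q\le k-1$. All remaining factors are $\equiv -\frac{r+j}{j}$ modulo $q^a$. The product of these remaining factors telescopes to $\pm\binom{k-1+r}{r}$ divided by the two missing terms. The two special factors then contribute $\frac{p-q}{q-r}$ and $\frac{p-r-q}{q}$, or only the first when $k=q$.

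After simplification, the failure of the congruence again comes down to showing $\frac pq\not\equiv0\pmod{q^a}$. Along the way I must treat small cases separately: $q=2$, $q=3$ with $r=2$, and $k=q$. I expect this bookkeeping to be the main obstacle. The special indices $j=q-r$ and $j=q$ interact, the right-hand side $-2^{m-1}(2^m+1)$ must be matched exactly, and when $p$ is even the powers of $2$ can cancel. My first attempt will be to compute both sides times the unit $\prod_{j\ne q-r,\,q}j$ and compare their $q$-adic valuations.
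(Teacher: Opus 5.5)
The real gap is in part (2) for even $p$. You were right to change the sign to $-2^m$. But the obstruction you met when dividing by $2^m$ is not a technicality: with the corrected sign, (2) is false for even $p$, so no bookkeeping will close it. For $p=8$ one has $M=2$, $\binom{6}{1}=6\equiv-2$ and $\binom{6}{3}=20\equiv-4 \pmod 8$. More generally, take $p=2^a$ with $a\ge3$. Every factor $\frac{2^a-i}{i}$ with $i\le 2^m$ is a $2$-adic unit congruent to $-1 \pmod{2^{a-m}}$, so $\binom{p-1}{2^m}\equiv1 \pmod{2^{a-m}}$. Then $\binom{p-2}{2^m-1}\equiv-2^m\binom{p-1}{2^m}\equiv-2^m \pmod{2^a}$ for every $m\le M$. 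In your core argument this appears at $q=2$: minimality gives $m=1$, $k=2$, and $\binom{p-2}{1}=p-2\equiv-2$ holds for every $p$. The result this item is drawn from, \thmref{thm p-2 maina}, assumes $p$ odd. Its proof is exactly the step you set aside: divide by the unit $2^m$ to reduce (2) to (1).

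There is a second, fixable hole in part (3) for even $p$. Your rule again picks $m=1$ when $q=2$, and $\binom{p-3}{1}=p-3\equiv-3=-2^{0}(2^1+1)$ always holds, so you must use $m=2$ instead; this is available because $p>4$ forces $M\ge2$. The paper shows $\binom{p-3}{3}\not\equiv-10 \pmod p$ by splitting into $p\equiv0,2,4 \pmod 6$. In your framework, with $2^a\,\|\,p$, you can instead use $3\binom{p-3}{3}=(p-3)(p-5)\frac{p-4}{2}\equiv15\bigl(\frac p2-2\bigr) \pmod{2^a}$, which is $\equiv-30$ only if $2^a\mid 15\cdot\frac p2$.

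For odd $p$, your direct computation in (3) is sound and is a genuine alternative to the paper's route. The paper reduces (3) to \thmref{thm p-2 mainb} via $2^m\binom{p-2}{2^m}=(p-2)\binom{p-3}{2^m-1}$, then runs the product argument on $\binom{p-2}{2^m}$. In your version, when $q$ is odd you have $q<k$, both special factors occur, and the telescoping gives $\binom{p-3}{k-1}\equiv\bigl(\frac pq-1\bigr)\frac{k(k+1)}{2} \pmod{q^a}$. Here $\frac{k(k+1)}{2}$ is a unit because $q<k+1<2q$. Your part (1) is the same as the paper's proof of \thmref{thm p-1 main}.
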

Moreover, we proved analogous results in \thmref{thm p-1 main2}, \thmref{thm p-1 main3}, \thmref{thm p-2 mainb},  \thmref{thm p-2 mainc}, 
\thmref{thm p-3 main3}, and \thmref{thm p-3 main2}.

\section{Congruences of Binomial Coefficients} \label{sec cong}
First, we recall the following elementary fact:
\begin{remark}\label{rem1}
Let $n$ and $t$ be any two positive integers. Suppose $k$ is an integer satisfying $0 < k < n$ and $\gcd{(n,k)}=1$, then $k$ is invertible in $Z_n^*$. Thus, we have the following congruence:
\begin{equation*}\label{eqn cong1}
\begin{split}
\frac{n t-k}{k}  \equiv -1 \quad \mod \quad n.
\end{split}
\end{equation*}
\end{remark}
Next, we recall the following basic fact:
\begin{proposition}\label{prop basic}
Let $n>1$ be an integer. If $n$ has no prime divisor $p$ with $p \leq \sqrt{n}$, then $n$ is a prime.
\end{proposition}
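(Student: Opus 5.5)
The statement is the elementary fact that a composite $n>1$ has a prime divisor at most $\sqrt{n}$. I will prove the contrapositive: assume $n>1$ is composite and produce a prime divisor $p$ of $n$ with $p\le\sqrt{n}$.

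First, since $n$ is composite, write $n=ab$ with integers $1<a\le b<n$. Such a factorization exists by the definition of a composite number; after relabeling we may take $a\le b$. Then $a^2\le ab=n$, so $a\le\sqrt{n}$.

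Next, since $a>1$, $a$ has at least one prime divisor $p$. For instance, take $p$ to be the smallest divisor of $a$ exceeding $1$. It is prime, because any proper divisor $d$ of $p$ with $1<d<p$ would be a smaller divisor of $a$ exceeding $1$. Then $p\mid a$ and $a\mid n$ give $p\mid n$. Also $p\le a\le\sqrt{n}$. This contradicts the hypothesis that $n$ has no prime divisor at most $\sqrt{n}$, so $n$ must be prime.

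There is no real obstacle here. The only points needing care are the existence of a prime factor of $a>1$, which is handled by the least-divisor argument or by well-ordering, and the ordering $a\le b$, which gives the bound $a\le\sqrt{n}$.
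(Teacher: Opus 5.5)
Your proof is correct. The paper gives no proof of this proposition and simply recalls it as a basic fact. Your argument is the standard one that fills this in: write $n=ab$ with $1<a\le b$, deduce $a\le\sqrt{n}$, and take the least divisor of $a$ exceeding $1$ as a prime. The contrapositive you prove (a composite $n$ has a prime divisor $q\le\sqrt{n}$) is also exactly the form the paper invokes in its later theorems.
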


We state one of the main results of this paper as follows:
\begin{theorem}\label{thm p-1 main} Let $p>2$ be an integer, and let $M$ be the smallest integer with $\sqrt{p} \leq 2^M \leq p-1$. We have
$$\displaystyle{\text{$p$ is a prime number if and only if } \binom{p-1}{2^m}   \equiv 1 \quad \mod \, \, \, p \text{ for each $m \in \{ 1, \, 2, \, \dots, M \}$}.}$$
\end{theorem}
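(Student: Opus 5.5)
The forward direction follows at once from \thmref{thm Leibniz2}: if $p$ is prime, then $\binom{p-1}{i}\equiv(-1)^i$ modulo $p$ for all $0\le i\le p-1$. Since $2^m\le 2^M\le p-1$ and $2^m$ is even, this gives $\binom{p-1}{2^m}\equiv 1$. For the converse I argue by contraposition. Assume $p>2$ is composite. By \propref{prop basic}, $p$ has a prime divisor $q\le\sqrt p$; take $q$ to be the smallest one. Let $m$ be the least integer with $q\le 2^m$. Since $q\ge 2$, we get $m\ge1$. Since $q\le\sqrt p\le 2^M$, we get $m\le M$. Put $k=2^m$. The minimality of $m$ gives $2^{m-1}<q$, so
$$q\le k<2q\le p .$$
In particular, $q$ is the only multiple of $q$ in $\{1,\dots,k\}$. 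This choice of $k$ is the heart of the argument.

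Next, write $p=q^v r$ with $v\ge1$ and $\gcd(q,r)=1$, and work modulo $q^v$. I will use the product formula
$$\binom{p-1}{k}=\prod_{j=1}^{k}\frac{p-j}{j}.$$
For every $j\ne q$ with $1\le j\le k$, $j$ is prime to $q$, hence invertible modulo $q^v$. For such $j$, $(p-j)/j\equiv -1 \pmod{q^v}$, in the spirit of Remark~\ref{rem1}, since $q^v\mid p$. The factor $j=q$ is the honest integer $\frac{p-q}{q}=\frac pq-1$. To justify reducing the rational product factor by factor, I clear denominators. Let $J=\prod_{j\ne q} j$, which is a unit modulo $q^v$. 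Then
$$J\binom{p-1}{k}=\Bigl(\tfrac pq-1\Bigr)\prod_{j\ne q}(p-j),$$
and $\prod_{j\ne q}(p-j)\equiv(-1)^{k-1}J\pmod{q^v}$. Since $k$ is even, cancelling the unit $J$ gives
$$\binom{p-1}{k}\equiv -\Bigl(\tfrac pq-1\Bigr)=1-\tfrac pq \pmod{q^v}.$$

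Finally, suppose that $\binom{p-1}{k}\equiv1\pmod p$. Because $q^v\mid p$, this forces $\frac pq\equiv0\pmod{q^v}$. But $\frac pq=q^{v-1}r$ has $q$-adic valuation exactly $v-1<v$, a contradiction. Therefore $\binom{p-1}{2^m}\not\equiv1\pmod p$ for this particular $m\in\{1,\dots,M\}$, which proves the converse.

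The main obstacle I expect is making the congruence manipulation of the rational product rigorous when $q^2\mid p$. This is why I work modulo $q^v$ rather than modulo $q$, and why I clear the denominator $J$ explicitly. Working only modulo $q$ would lose the information when $v\ge2$, because $\frac pq\equiv0\pmod q$ in that case.
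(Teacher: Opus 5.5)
Your proof is correct and is essentially the paper's argument. Both argue by contraposition using a prime divisor $q\le\sqrt p$ with $2^{m-1}<q\le 2^m<2q$, reduce $\binom{p-1}{2^m}$ modulo $q^v$ (the paper's $q^k$) to $1-p/q$, and derive a contradiction from $\gcd(q,r)=1$.

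The differences are minor. Taking $q$ to be the smallest prime divisor is unnecessary. Your explicit clearing of the unit denominator $J$ makes rigorous the factor-by-factor reduction that the paper performs informally via Remark~\ref{rem1}.
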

\begin{proof}
We'll prove both directions.

\textbf{Case I ($\Rightarrow$):}
If $p$ is a prime number, then \thmref{thm Leibniz2} gives $\binom{p-1}{2^m}   \equiv (-1)^{2^m}  \equiv 1 \quad \mod \, \, \, p$ 
for each $m \in \{ 1, \, 2, \, \dots, M \}$.

\textbf{Case II ($\Leftarrow$):}
We'll prove the contraposition. Namely, we prove that if $p$ is not a prime number, then $\binom{p-1}{2^m}   \not \equiv 1 \quad \mod \, \, \, p \text{ for some $m \in \{ 1, \, 2, \, \dots, M \}$}.$

If $p$ is not a prime number, then it has a prime divisor $q$ such that $q \leq \sqrt{p}$ by \propref{prop basic}. Then, $p=q^k u$ for some integers $k \geq 1$ and
$u \geq 1$ with $\gcd(q,u)=1$. Since $q \leq \sqrt{p}$, there is a positive integer $m \leq M$ such that $2^{m-1} < q \leq 2^m < 2q$. 
Then,
\begin{equation*}\label{eqn cong3a}
\begin{split}
\binom{p-1}{2^m} &= \frac{p-1}{1} \cdot \frac{p-2}{2} \cdots \frac{p-q+1}{q-1} \cdot \frac{p-q}{q} \cdot \frac{p-q-1}{q+1}\cdots
\frac{p-2^{m}}{2^{m}}\\
&= \frac{q^k u-1}{1} \cdot \frac{q^k u-2}{2} \cdots \frac{q^k u-q+1}{q-1} \cdot \frac{q^k u-q}{q} \cdot \frac{q^k u-q-1}{q+1}\cdots
\frac{q^k u-2^{m}}{2^{m}}\\
& \equiv (-1).(-1) \dots (-1) . (q^{k-1} u-1). (-1) \dots (-1) \quad \mod \, \, \, q^k \quad \text{by \remref{rem1}}\\
& = (-1)^{2^m-1} (q^{k-1} u-1) \\
& = 1-q^{k-1} u.
\end{split}
\end{equation*}

Suppose $\binom{p-1}{2^m} \equiv 1 \quad \mod \, \, \, p$, then  $\binom{p-1}{2^m} \equiv 1 \quad \mod \, \, \, q^k$. Then, we would have
$1-q^{k-1} u \equiv 1 \quad \mod \, \, \, q^k$, i.e., $u \equiv 0 \quad \mod \, \, \, q$. However, this contradicts with $\gcd(q,u)=1$.
Thus, we proved that if $p$ is not a prime number, then  $\binom{p-1}{2^m}   \not \equiv 1 \quad \mod \, \, \, p \text{ for some $m \in \{ 1, \, 2, \, \dots, M \}$}.$

Hence, the proof of the theorem is completed.
\end{proof}

\begin{remark}\label{rem2}
When we work with the smallest integer $M$ with $\sqrt{p} \leq 2^M \leq p-1$, taking the logarithm gives $\frac{1}{2}\log_{2}p =\log_{2}\sqrt{p} \leq \log_{2}2^M=M$. Therefore, the smallest such integer $M$ is $\lceil \frac{1}{2}\log_{2}p \rceil$.
\end{remark}

Here are some examples:

We have $1024=2^{10} < 1093 < 2^{11}=2048$, $\binom{1093^2-1}{2^k} \equiv 1 \quad \mod \, \, \, 1093^2$ for each integer $1 \leq k \leq 10$, but $1093^2$ fails to be a prime number by \thmref{thm p-1 main} as $\binom{1093^{2}-1}{2^{11}} \equiv 1193557 \not \equiv 1$  modulo $1093^2$. Note that $1093^2=1194649$ is a Catalan pseudoprime.

We have $\binom{1194659-1}{2^k} \equiv 1 \quad \mod \, \, \, 1194659$ for each integer $1 \leq k \leq 11$ and $2^{11}>\sqrt{1194659} \approx 1093.00457>2^{10}$, so $1194659$ is a prime number by \thmref{thm p-1 main}.

\begin{theorem}\label{thm p-1 main2} Let $p>4$ be an integer, and let $M$ be the smallest integer with $\sqrt{p} \leq 2^M \leq p-1$. We have
$$\displaystyle{\text{$p$ is a prime number if and only if } \binom{p-1}{2^m-1}   \equiv -1 \quad \mod \, \, \, p \text{ for each $m \in \{ 1, \, 2, \, \dots, M \}$}.}$$
\end{theorem}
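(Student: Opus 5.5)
The plan is to follow the proof of \thmref{thm p-1 main} closely, with the top index of the product now $2^m-1$ instead of $2^m$. The forward direction is immediate from \thmref{thm Leibniz2}: for $p$ prime, $\binom{p-1}{2^m-1}\equiv(-1)^{2^m-1}=-1 \pmod p$. For the converse I will prove the contrapositive. If $p$ is composite, \propref{prop basic} gives a prime divisor $q\le\sqrt p$; write $p=q^k u$ with $k\ge1$ and $\gcd(q,u)=1$. I will then find some $m\le M$ with $\binom{p-1}{2^m-1}\not\equiv -1 \pmod{q^k}$, which is enough.

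\textbf{Case $q$ odd.} Since $q\le \sqrt p\le 2^M$, there is $m\le M$ with $2^{m-1}<q\le 2^m$. Because $q$ is odd and $m\ge1$, we have $q\neq 2^m$, hence $q\le 2^m-1<2q$. So among $1,\dots,2^m-1$ the only multiple of $q$ is $q$ itself. Write
$$\binom{p-1}{2^m-1}=\prod_{j=1}^{2^m-1}\frac{p-j}{j}.$$
For $j\neq q$, $j$ is coprime to $q$ and hence invertible modulo $q^k$, so $\frac{p-j}{j}\equiv -1 \pmod{q^k}$ by \remref{rem1}. The factor with $j=q$ is the integer $q^{k-1}u-1$. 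Hence
$$\binom{p-1}{2^m-1}\equiv(-1)^{2^m-2}(q^{k-1}u-1)=q^{k-1}u-1 \pmod{q^k}.$$
If this were $\equiv -1$, we would get $q^{k-1}u\equiv 0 \pmod{q^k}$, so $q\mid u$, which is a contradiction.

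\textbf{Case $q=2$.} This is the one place where the argument of \thmref{thm p-1 main} breaks. With $m=1$ the product has only the single factor $p-1$ and never reaches the multiple $2$. Instead I take $m=2$. This is allowed: $p>4$ gives $\sqrt p>2$, so $M\ge2$ (this is where the hypothesis $p>4$ enters). Now $2^m-1=3$, and the three factors are
$$\frac{p-1}{1}\equiv -1,\qquad \frac{p-2}{2}=2^{k-1}u-1,\qquad \frac{p-3}{3}\equiv -1 \pmod{2^k},$$
again using \remref{rem1} for the first and third. So $\binom{p-1}{3}\equiv 2^{k-1}u-1 \pmod{2^k}$. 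Since $u$ is odd, this is $\not\equiv -1$.

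The main obstacle is exactly this boundary case $q=2^m$. It can only happen for $q=2$, and it is handled by moving to $m=2$, which is why the hypothesis is strengthened from $p>2$ to $p>4$. The remaining point is to justify computing with the fractions modulo $q^k$. Every denominator other than $q$ is a unit modulo $q^k$, and the $q$-term simplifies to an integer. So the product of residues equals the binomial coefficient (an integer) modulo $q^k$, exactly as in the proof of \thmref{thm p-1 main}.
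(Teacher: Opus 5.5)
Your proof is correct. For odd $p$ it is the argument the paper has in mind; the paper only says ``the same arguments used in the proof of \thmref{thm p-1 main}''. You also supply the one point that needs checking: an odd $q$ cannot equal $2^m$, so $q\le 2^m-1<2q$, and the factor $\frac{p-q}{q}$ really occurs in the shortened product.

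For even $p$ your route differs from the paper's. The paper works modulo $p$ itself and computes $\binom{p-1}{3}$ explicitly. Since $3$ need not be a unit modulo $p$, it must split into two cases:
\begin{itemize}
\item $p=2a$ with $\gcd(a,3)=1$, where $\binom{2a-1}{3}\equiv a-1$ modulo $2a$;
\item $p=6a$, where $\binom{6a-1}{3}\equiv 5a-1$ modulo $6a$.
\end{itemize}
You instead reduce modulo $2^k$, the exact power of $2$ dividing $p$. There $3$ is always invertible, so the case $q=2$ becomes one more instance of the general product argument, with $m=2$ (since $2\le 2^2-1<4$).

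This makes the whole proof uniform. What is really needed is some $m\le M$ with $q\le 2^m-1<2q$, and your version removes the case split on divisibility by $3$. The trade-off is that you only learn the residue modulo $2^k$, rather than the exact residue modulo $p$ that the paper's computation gives; that is all the argument needs.

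A minor point: \remref{rem1} as stated assumes $0<k<n$. That hypothesis can fail in your setting, e.g.\ $j=3$ with modulus $2$ when $k=1$, or $q<j<2^m$ with modulus $q$ when $k=1$ in the odd case. Only the invertibility of $j$ modulo $q^k$ is actually used, as your last paragraph indicates, so this is harmless.
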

\begin{proof}
When $p$ is a prime,  $\binom{p-1}{2^m-1}  \equiv -1 \quad \mod \, \, \, p$  by \thmref{thm Leibniz2}.


For an odd number $p$, suppose $\binom{p-1}{2^m-1}   \equiv -1 \quad \mod \, \, \, p \text{ for each $m \in \{ 1, \, 2, \, \dots, M \}$.}$ Then following the same 
arguments used in the proof of \thmref{thm p-1 main} gives the result in this case.

For an even number $p=2a$ with a positive integer $a$ such that $\gcd(a,3)=1$, we have $M \geq 2$. Then for $m=2$, 
$\binom{p-1}{2^m-1} = \binom{2a-1}{3} \equiv a-1 \quad \mod \, \, \, 2a$. Therefore, the congruence $\binom{p-1}{2^2-1} \equiv -1 \quad \mod \, \, \, p$ leads to $a \equiv 0 \quad \mod \, \, \, 2a$, which is not possible.

As the last case, if $p=6a$ for some positive integer $a$, then $M \geq 2$ and for $m=2$ we have 
$\binom{p-1}{2^m-1} = \binom{6a-1}{3} \equiv 5a-1 \quad \mod \, \, \, 6a$. Therefore, the congruence $\binom{p-1}{2^2-1} \equiv -1 \quad \mod \, \, \, p$ leads to $5a \equiv 0 \quad \mod \, \, \, 6a$, which is impossible.
\end{proof}

\begin{theorem}\label{thm p-1 main3} Let $p>5$ be an integer, and let $M$ be the smallest integer with $\sqrt{p} \leq 2^M \leq p-1$. We have
$$\displaystyle{\text{$p$ is a prime number if and only if } \binom{p-1}{2^m+1}   \equiv -1 \quad \mod \, \, \, p \text{ for each $m \in \{ 1, \, 2, \, \dots, M \}$}.}$$
\end{theorem}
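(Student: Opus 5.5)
The plan is to follow the proof of \thmref{thm p-1 main} closely: use \thmref{thm Leibniz2} for the forward direction, and for the converse work modulo the full power of a small prime divisor $q$ of $p$, choosing $m$ so that $q$ is the only multiple of $q$ among $1, 2, \dots, 2^m+1$.

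\textbf{Forward direction.} Suppose $p$ is prime. By \thmref{thm Leibniz2}, $\binom{p-1}{2^m+1} \equiv (-1)^{2^m+1} = -1 \mod p$, provided $2^m+1 \leq p-1$ for all $m \leq M$. To check this, note that minimality of $M$ gives $2^{M-1} < \sqrt{p}$, so $2^M < 2\sqrt{p}$. For $p \geq 8$ we have $2\sqrt{p} \leq p-2$, hence $2^M+1 \leq p-1$. The remaining cases $p=6,7$ have $M=2$ and $2^M+1=5 \leq p-1$, which I check directly. Apart from this range check, the direction is immediate.

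\textbf{Converse.} I argue by contraposition. Let $p$ be composite. By \propref{prop basic} it has a prime divisor $q \leq \sqrt{p}$; write $p = q^k u$ with $k \geq 1$ and $\gcd(q,u)=1$. As in \thmref{thm p-1 main}, choose $m \leq M$ with $2^{m-1} < q \leq 2^m$. Then $q \leq 2^m+1$, and $2q \geq 2^m+2 > 2^m+1$. So among $1, 2, \dots, 2^m+1$ the only multiple of $q$ is $q$ itself.

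Every other $j$ in this range is a unit modulo $q^k$. To avoid dividing by non-units, I write
$$\binom{p-1}{2^m+1} \cdot (2^m+1)! = \prod_{j=1}^{2^m+1}(p-j).$$
I then cancel the factor $q$ from both sides (the $j=q$ term contributes $p-q = q(q^{k-1}u-1)$) and work modulo $q^k$. Each factor $(p-j)/j$ with $j \neq q$ is $\equiv -1$ by \remref{rem1}. This yields
$$\binom{p-1}{2^m+1} \equiv (-1)^{2^m}(q^{k-1}u-1) = q^{k-1}u-1 \mod q^k.$$
If the binomial coefficient were $\equiv -1 \mod p$, it would also be $\equiv -1 \mod q^k$. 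That would force $q^{k-1}u \equiv 0 \mod q^k$, i.e.\ $q \mid u$, contradicting $\gcd(q,u)=1$.

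\textbf{Even $p$.} No separate treatment is needed, unlike in \thmref{thm p-1 main2}. For $q=2$, take $m=1$, which is allowed since $M \geq 1$. The indices are then $j=1,2,3$: only $j=2$ is even, $3$ is invertible modulo $2^k$, and the same computation gives $2^{k-1}u-1 \not\equiv -1 \mod 2^k$.

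\textbf{Main obstacle.} The step needing care is making the ``fraction'' manipulation rigorous modulo $q^k$. The factorial identity above, with the single factor $q$ cancelled before reducing, handles this. Besides that, the only other delicate point is the bound $2^M+1 \leq p-1$ in the forward direction; this is why the statement requires $p>5$.
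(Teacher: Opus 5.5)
Your proof is correct and matches the paper's argument: the paper's proof just says to repeat the proof of \thmref{thm p-1 main}. That means choosing $m$ with $2^{m-1}<q\le 2^m$ so that $q$ is the only multiple of $q$ among $1,\dots,2^m+1$, then reducing modulo $q^k$ to get $\binom{p-1}{2^m+1}\equiv q^{k-1}u-1$. Your extra steps fill in details the paper leaves implicit: the check $2^M+1\le p-1$ in the forward direction (which is exactly where $p>5$ is needed), the parity argument $2q\ne 2^m+1$, and clearing denominators via $(2^m+1)!$ instead of working with fractions modulo $q^k$.
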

\begin{proof}
Arguments as in the proof of \thmref{thm p-1 main} give the result.
\end{proof}


Next, we consider the sufficient sets of congruences involving the binomial coefficients of type $\binom{p-2}{i}$.
\begin{theorem}\label{thm p-2 maina} Let $p>2$ be an odd integer, and let $M$ be the smallest integer with $\sqrt{p} \leq 2^M \leq p-1$. Then,
$p$ is a prime number if and only if  $\binom{p-2}{2^m-1}   \equiv - 2^m \quad \mod \, \, \, p$ for each $m \in \{ 1, \, 2, \, \dots, M \}$.
\end{theorem}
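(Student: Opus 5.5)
The plan is to reduce \thmref{thm p-2 maina} to \thmref{thm p-1 main} using the integer identity
$$(p-1)\binom{p-2}{j} = (j+1)\binom{p-1}{j+1},$$
applied with $j=2^m-1$, which reads $(p-1)\binom{p-2}{2^m-1}=2^m\binom{p-1}{2^m}$.

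\textbf{Forward direction.} Suppose $p$ is prime. From $\binom{p-1}{j}=\binom{p-2}{j}+\binom{p-2}{j-1}$ and \thmref{thm Leibniz2}, induction on $j$ gives
$$\binom{p-2}{j}\equiv(-1)^j(j+1) \quad \mod \, \, \, p.$$
The base case is $\binom{p-2}{0}=1$. For the step, $(-1)^j-(-1)^{j-1}j=(-1)^j(j+1)$. Taking $j=2^m-1$, which is odd, gives $\binom{p-2}{2^m-1}\equiv -2^m \mod p$. (Equivalently, since $p-1\equiv -1$, this follows from the identity above together with $\binom{p-1}{2^m}\equiv 1$.)

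\textbf{Converse, by contraposition.} Let $p$ be odd and composite. By \propref{prop basic}, $p$ has a prime divisor $q\le\sqrt p$, and $q$ is odd because $p$ is odd. Write $p=q^k u$ with $k\ge1$ and $\gcd(q,u)=1$. As in the proof of \thmref{thm p-1 main}, choose $m\le M$ with $2^{m-1}<q\le 2^m<2q$; since $q$ is odd, in fact $q<2^m$. The computation in that proof, based on \remref{rem1}, gives
$$\binom{p-1}{2^m}\equiv 1-q^{k-1}u \quad \mod \, \, \, q^k.$$
Reducing the identity modulo $q^k$ and using $p-1\equiv -1 \mod q^k$ then yields
$$\binom{p-2}{2^m-1}\equiv -2^m\left(1-q^{k-1}u\right) \quad \mod \, \, \, q^k.$$
Now suppose $\binom{p-2}{2^m-1}\equiv -2^m \mod p$. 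Then the same congruence holds modulo $q^k$, which forces $2^m q^{k-1}u\equiv 0 \mod q^k$, that is, $q\mid 2^m u$. This is impossible, since $q$ is odd and $\gcd(q,u)=1$. So the required congruence fails for this $m$.

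\textbf{Main obstacle.} The delicate step is making the borrowed congruence for $\binom{p-1}{2^m}$ modulo $q^k$ rigorous. The expression is a product of fractions, and the factors $(p-i)/i$ with $q\nmid i$ are handled by inverting $i$ modulo $q^k$ via \remref{rem1}. Only the factor $i=q$ is special: it equals $q^{k-1}u-1$, which is an integer. To keep this clean, I would clear denominators. Write
$$\binom{p-1}{2^m}\cdot (2^m)! = \prod_{i=1}^{2^m}(p-i).$$
Since $2^m<2q$, the prime $q$ divides exactly one $i$ in the range, namely $i=q$. Cancelling that factor of $q$ leaves a congruence in which every remaining factor is a unit modulo $q^k$, and this justifies the formula.

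The identity relating the $(p-2)$- and $(p-1)$-binomials is exact over the integers, so the transfer to $\binom{p-2}{2^m-1}$ needs no further care. The only place oddness of $p$ is used is to ensure $q\ne 2$, so that the factor $2^m$ cannot absorb $q$.
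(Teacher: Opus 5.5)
Your proof is correct and follows the paper's route: the paper also reduces to \thmref{thm p-1 main} through the identity $2^m\binom{p-1}{2^m}=(p-1)\binom{p-2}{2^m-1}$, and your parenthetical forward argument is the one the paper gives. The only difference is in the converse: the paper cancels the unit $2^m$ modulo the odd integer $p$ to get $\binom{p-1}{2^m}\equiv 1$ modulo $p$ for every $m$, then cites \thmref{thm p-1 main} as a black box, so your re-derivation of the congruence modulo $q^k$ is valid but not needed.
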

\begin{proof}
Since $\binom{p-1}{2^m}=\frac{p-1}{2^m}\binom{p-2}{2^m-1}$, we have 
\begin{equation}\label{eqn binom1}
\begin{split}
2^m \binom{p-1}{2^m}=(p-1)\binom{p-2}{2^m-1}.
\end{split}
\end{equation}
If $p$ is a prime number, then
$2^m \equiv -\binom{p-2}{2^m-1} \quad \mod \, \, \, p$ for each $m \in \{ 1, \, 2, \, \dots, M \}$ by \eqref{eqn binom1} and \thmref{thm p-1 main}. This is what we wanted to have.

Suppose $\binom{p-2}{2^m-1}   \equiv - 2^m \quad \mod \, \, \, p$ for each $m \in \{ 1, \, 2, \, \dots, M \}$. Then 
$2^m \binom{p-1}{2^m} \equiv (p-1)(-2^m) \quad \mod \, \, \, p$ by \eqref{eqn binom1}. Moreover, if $p$ is odd, this congruence can be written as  $\binom{p-1}{2^m} \equiv 1 \quad \mod \, \, \, p$ for each $m \in \{ 1, \, 2, \, \dots, M \}$. Then, $p$ is a prime number by  \thmref{thm p-1 main}.

This completes the proof of the theorem.
\end{proof}

Here are some examples:

Since $\sqrt{137}<12<2^4$ and $\binom{137-2}{2^1-1} \equiv -2$, $\binom{137-2}{2^2-1} \equiv -2^2$, $\binom{137-2}{2^3-1} \equiv -2^3$, $\binom{137-2}{2^4-1} \equiv -2^4$  modulo $137$, we conclude that $137$ is a prime.

Since $\sqrt{341}<19<2^5$ and $\binom{341-2}{2^4-1} \not \equiv -2^4 \quad \mod \, \, \, 341$, we conclude that $341$ is not a prime.

Although $\binom{19^2-2}{2^1-1} \equiv -2$, $\binom{19^2-2}{2^2-1} \equiv -2^2$, $\binom{19^2-2}{2^3-1} \equiv -2^3$, $\binom{19^2-2}{2^4-1} \equiv -2^4$ modulo $19^2$, $19^2$ fails to be prime as $\binom{19^5-2}{2^5-1} \not \equiv -2^5$  modulo $19^2$.

\begin{theorem}\label{thm p-2 mainb} Let $p>2$ be an integer, and let $M$ be the smallest integer with $\sqrt{p} \leq 2^M \leq p-1$. Then,
$p$ is a prime number if and only if $\binom{p-2}{2^m}   \equiv 2^m+1 \quad \mod \, \, \, p$ for each $m \in \{ 1, \, 2, \, \dots, M \}$.
\end{theorem}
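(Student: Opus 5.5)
The plan is to reduce this to \thmref{thm p-1 main} with a simple identity linking $\binom{p-2}{2^m}$ to $\binom{p-1}{2^m}$, and then reuse the modulo-$q^k$ computation from its proof. Since $\binom{n-1}{r}=\frac{n-r}{n}\binom{n}{r}$, taking $n=p-1$ and $r=2^m$ gives the integer identity
\begin{equation*}
(p-1)\binom{p-2}{2^m}=(p-1-2^m)\binom{p-1}{2^m}.
\end{equation*}
Reducing modulo $p$ gives $-\binom{p-2}{2^m}\equiv -(2^m+1)\binom{p-1}{2^m}$. Hence
\begin{equation*}
\binom{p-2}{2^m}\equiv (2^m+1)\binom{p-1}{2^m} \quad \mod \, \, \, p
\end{equation*}
for every integer $p>2$. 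No inverse of $p-1$ is needed here, because both sides of the integer identity carry explicit factors.

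\textbf{($\Rightarrow$).} If $p$ is prime, \thmref{thm p-1 main} (or \thmref{thm Leibniz2}) gives $\binom{p-1}{2^m}\equiv 1$. The displayed congruence then yields $\binom{p-2}{2^m}\equiv 2^m+1 \pmod p$ for each $m$.

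\textbf{($\Leftarrow$).} I argue by contraposition, as in \thmref{thm p-1 main}. Suppose $p$ is composite. Take a prime $q\le\sqrt p$ with $p=q^ku$ and $\gcd(q,u)=1$, and choose $m\le M$ with $2^{m-1}<q\le 2^m<2q$.

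The computation in the proof of \thmref{thm p-1 main} gives $\binom{p-1}{2^m}\equiv 1-q^{k-1}u \pmod{q^k}$. Suppose, for contradiction, that $\binom{p-2}{2^m}\equiv 2^m+1 \pmod p$. Then the same congruence holds modulo $q^k$. Combining it with the identity gives $(2^m+1)(1-q^{k-1}u)\equiv 2^m+1 \pmod{q^k}$. This means $q^k$ divides $(2^m+1)q^{k-1}u$, so $q \mid (2^m+1)u$, and hence $q\mid 2^m+1$.

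The main obstacle is ruling out $q\mid 2^m+1$, which is where this differs from \thmref{thm p-1 main}. I will use the bound $q\le 2^m<2q$, which gives $q<2^m+1\le 2q$. Any multiple of $q$ in the interval $(q,2q]$ equals $2q$, so $q\mid 2^m+1$ would force $2^m+1=2q$. That is impossible, because $2^m+1$ is odd when $m\ge 1$.

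The case $q=2$ needs a separate look. There $m=1$ and $2^m+1=3$, which is not divisible by $2$. So no parity restriction on $p$ is needed, consistent with the statement allowing every $p>2$. This contradiction shows that some $m\le M$ has $\binom{p-2}{2^m}\not\equiv 2^m+1 \pmod p$, which completes the proof.
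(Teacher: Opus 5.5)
Your proof is correct. Its skeleton is the paper's: contraposition via a prime $q\le\sqrt p$, the choice $2^{m-1}<q\le 2^m$, and reduction modulo $q^k$ to $(2^m+1)(1-q^{k-1}u)$. The route to that reduction differs.

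\begin{itemize}
\item The paper proves ($\Rightarrow$) through the identity $(2^m+1)\binom{p-1}{2^m+1}=(p-1)\binom{p-2}{2^m}$ together with \thmref{thm p-1 main3}. It proves ($\Leftarrow$) by recomputing the product of fractions for $\binom{p-2}{2^m}$ modulo $q^k$ from scratch.
\item You use the single congruence $\binom{p-2}{2^m}\equiv(2^m+1)\binom{p-1}{2^m}\pmod p$, valid for every $p$ with no cancellation, in both directions. Forward, you combine it with \thmref{thm Leibniz2}. Backward, you import the value $1-q^{k-1}u$ of $\binom{p-1}{2^m}$ modulo $q^k$ already computed in \thmref{thm p-1 main}.
\end{itemize}

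Your route gains two small things.
\begin{itemize}
\item Your forward direction covers every $p>2$. The paper's appeal to \thmref{thm p-1 main3} formally needs $p>5$. For $p=3,5$ and $m=M$ one has $2^M+1=p$, so $\binom{p-1}{2^M+1}=0\not\equiv-1$; the conclusion survives there only because $2^M+1\equiv 0$.
\item Your parity argument actually proves $\gcd(q,2^m+1)=1$. The only multiple of $q$ in $(q,2q]$ is the even number $2q$, while $2^m+1$ is odd. The paper merely asserts $2^m+1<2q$.
\end{itemize}

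You were right to reuse the computation modulo $q^k$ rather than the statement of \thmref{thm p-1 main}. Cancelling $2^m+1$ modulo $p$ is illegitimate when $\gcd(p,2^m+1)>1$ (e.g.\ $3\mid p$, $m=1$), whereas modulo $q^k$ it is fine. Your separate check of $q=2$ is redundant, since the parity argument already covers it.
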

\begin{proof}
\textbf{Case I ($\Rightarrow$):}

Since $\binom{p-1}{2^m+1}=\frac{p-1}{2^m+1}\binom{p-2}{2^m}$, we have 
\begin{equation}\label{eqn binom2}
\begin{split}
(2^m+1) \binom{p-1}{2^m+1}=(p-1)\binom{p-2}{2^m}.
\end{split}
\end{equation}
If $p$ is a prime number, then
$(2^m+1) (-1) \equiv -\binom{p-2}{2^m-1} \quad \mod \, \, \, p$ for each $m \in \{ 1, \, 2, \, \dots, M \}$ by \eqref{eqn binom2} and \thmref{thm p-1 main3}. This is what we wanted to show.

\textbf{Case II ($\Leftarrow$):}
The proof of this part is similar to the proof given in \thmref{thm p-1 main}.

We'll prove the contraposition. Namely, we prove that if $p$ is not a prime number, then $\binom{p-2}{2^m}   \not \equiv 2^m+1 \quad \mod \, \, \, p \text{ for some $m \in \{ 1, \, 2, \, \dots, M \}$}.$

If $p$ is not a prime number, then it has a prime divisor $q$ such that $q \leq \sqrt{p}$ by \propref{prop basic}. Then, $p=q^k u$ for some integers $k \geq 1$ and
$u \geq 1$ with $\gcd(q,u)=1$. Since $q \leq \sqrt{p}$, there is a positive integer $m \leq M$ such that $2^{m-1} < q \leq 2^m <  2^m+1 < 2q$. In particular,  $\gcd(q,2^m+1)=1$.
Then,
\begin{equation*}\label{eqn cong3b}
\begin{split}
\binom{p-2}{2^m} &= \frac{p-2}{2} \cdots \frac{p-q+1}{q-1} \cdot \frac{p-q}{q} \cdot \frac{p-q-1}{q+1}\cdots
\frac{p-2^{m}}{2^{m}} \cdot \frac{p-2^m-1}{1}\\
&= \frac{q^k u-2}{2} \cdots \frac{q^k u-q+1}{q-1} \cdot \frac{q^k u-q}{q} \cdot \frac{q^k u-q-1}{q+1}\cdots
\frac{q^k u-2^{m}}{2^{m}} \cdot (q^k u-2^m-1)\\
& \equiv (-1) \dots (-1) . (q^{k-1} u-1). (-1) \dots (-1) . (-2^m-1) \quad \mod \, \, \, q^k \quad \text{by \remref{rem1}}\\
& = (-1)^{2^m-2} (q^{k-1} u-1) . (-2^m-1) \\
& = (1-q^{k-1} u) . (2^m+1).\\
\end{split}
\end{equation*}
Suppose $\binom{p-2}{2^m} \equiv 2^m+1 \quad \mod \, \, \, p$, then  $\binom{p-2}{2^m} \equiv 2^m+1 \quad \mod \, \, \, q^k$. Then, we would have
$(1-q^{k-1} u). (2^m+1) \equiv  2^m+1 \quad \mod \, \, \, q^k$. Since  $\gcd(q,2^m+1)=1$, $1-q^{k-1} u \equiv  1 \quad \mod \, \, \, q^k$,
i.e., $u \equiv 0 \quad \mod \, \, \, q$. However, this contradicts with $\gcd(q,u)=1$.
Thus, we proved that if $p$ is not a prime number, then  $\binom{p-2}{2^m}   \not \equiv 2^m+1 \quad \mod \, \, \, p \text{ for some $m \in \{ 1, \, 2, \, \dots, M \}$}.$
%

This completes the proof of the theorem.
\end{proof}

\begin{theorem}\label{thm p-2 mainc} Let $p>5$ be an odd integer, and let $M$ be the smallest integer with $\sqrt{p} \leq 2^M \leq p-1$. Then,

$p$ is a prime number if and only if $\binom{p-2}{2^m+1}   \equiv -2^m-2 \quad \mod \, \, \, p$ for each $m \in \{ 1, \, 2, \, \dots, M \}$.
\end{theorem}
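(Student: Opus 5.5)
The forward direction will follow from the identity relating $\binom{p-2}{i}$ to $\binom{p-1}{i+1}$, as in the proofs of \thmref{thm p-2 maina} and \thmref{thm p-2 mainb}; the converse will be a direct computation modulo $q^k$ in the style of \thmref{thm p-1 main}, with one exceptional family of $q$ handled by shifting $m$ down by one.

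\textbf{Case I ($\Rightarrow$).} From $\binom{p-1}{2^m+2}=\frac{p-1}{2^m+2}\binom{p-2}{2^m+1}$ we get $(2^m+2)\binom{p-1}{2^m+2}=(p-1)\binom{p-2}{2^m+1}$. If $p$ is prime, \thmref{thm Leibniz2} gives $\binom{p-1}{2^m+2}\equiv 1$, since $2^m+2$ is even and $2^m+2\le p-1$. Hence $-\binom{p-2}{2^m+1}\equiv 2^m+2 \pmod p$, which is the claim.

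\textbf{Case II ($\Leftarrow$), contrapositive.} Let $p$ be odd and composite. By \propref{prop basic} there is a prime $q\le\sqrt p$ with $q\mid p$, and $q$ is odd, so $q\ge 3$. Write $p=q^k u$ with $\gcd(q,u)=1$. For $r=2^s+1$ I write
$$\binom{p-2}{r}=\prod_{j=2}^{r}\frac{p-j}{j}\cdot (p-r-1).$$
If $q\le r<2q$, exactly one denominator is divisible by $q$, namely $j=q$. That factor gives $q^{k-1}u-1$, and every other quotient is $\equiv -1 \pmod{q^k}$ by \remref{rem1}. The extra numerator term gives $-(r+1)$, so
$$\binom{p-2}{r}\equiv (-1)^{r-2}(q^{k-1}u-1)\bigl(-(r+1)\bigr)=(1-q^{k-1}u)(r+1)\cdot(-1) \pmod{q^k},$$
using that $r$ is odd. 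Imposing $\binom{p-2}{r}\equiv -(r+1)$ would then force $q^{k-1}u(r+1)\equiv 0 \pmod{q^k}$, that is, $q\mid u(r+1)$. This is impossible as long as $q\nmid r+1$.

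Now choose $m\le M$ with $2^{m-1}<q\le 2^m$ and take $s=m$, so $r=2^m+1$. Then $q\le r$, and $2q\ge 2^m+2>r$, so the computation above applies. It contradicts the assumed congruence unless $q\mid 2^m+2$. Since $2^m+2<2q$ and $q$ is odd, this can only happen when $2^m+2=2q$, i.e. $q=2^{m-1}+1$ (as for $q=3,5,17,\dots$).

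\textbf{Main obstacle: the case $q=2^{m-1}+1$.} Here I switch to $s=m-1$, which lies in $\{1,\dots,M\}$ because $q\ge3$ forces $m\ge2$. Now $r=2^{m-1}+1=q$ exactly, so the requirement $q\le r<2q$ still holds, and $r+1=q+1$ is prime to $q$. The same computation then yields $q\mid u(q+1)$, a contradiction. So in every case some $m\in\{1,\dots,M\}$ violates the congruence.

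The step needing care is checking that in both choices no denominator other than $j=q$ is divisible by $q$. This holds since $r<2q$, so those denominators are invertible modulo $q^k$ and \remref{rem1} applies to each of them.
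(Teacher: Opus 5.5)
Your proof is correct, and its converse is essentially the paper's argument: the same computation of $\binom{p-2}{2^m+1}$ modulo $q^k$, with the exceptional case $q=2^{m-1}+1$ handled by dropping to the index $2^{m-1}+1$, i.e.\ to $m-1$. The differences are minor. You get the forward direction from $(2^m+2)\binom{p-1}{2^m+2}=(p-1)\binom{p-2}{2^m+1}$ and \thmref{thm Leibniz2}, rather than from Pascal's identity with \thmref{thm p-1 main3} and \thmref{thm p-2 mainb}. You also treat $q=3$ as the $m=2$ instance of $q=2^{m-1}+1$, which makes the paper's separate computation for $p=6a+3$ unnecessary.
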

\begin{proof}
If $p$ is a prime, for any $m \in \{ 1, \, 2, \, \dots, M \}$ we have
$\binom{p-1}{2^m+1}  \equiv -1 \quad \mod \, \, \, p$ by \thmref{thm p-1 main3} and $\binom{p-2}{2^m}  \equiv 2^m+1 \quad \mod \, \, \, p$ by \thmref{thm p-2 mainb}. Therefore, using the Pascal's identity
$\binom{p-1}{2^m+1}=\binom{p-2}{2^m}+\binom{p-2}{2^m+1}$ gives $ \binom{p-2}{2^m+1}   \equiv -2^m-2 \quad \mod \, \, \, p$.

If $p$ is an odd integer divisible by $3$, then $p=6a+3$ for some positive integer $a$, then $M \geq 2$ and for $m=1$ we have 
$\binom{p-2}{2^m+1} = \binom{6a+1}{3} \equiv 2a-3 \quad \mod \, \, \, 6a+3$. Therefore, the congruence $\binom{p-2}{2^1+1} \equiv -2^1-2 \quad \mod \, \, \, p$ leads to $2a+1 \equiv 0 \quad \mod \, \, \, 6a+3$, which is impossible.

If $p$ is a composite odd integer with $\gcd(p,3)=1$, then 
it has an odd prime divisor $q$ such that  $\gcd(q,3)=1$ and $q \leq \sqrt{p}$ by \propref{prop basic}. Then, $p=q^k u$ for some integers $k \geq 1$ and
$u \geq 1$ with $\gcd(q,u)=1$. Since $q \leq \sqrt{p}$, there is a positive integer $m \leq M$ such that $2^{m-1} < q \leq 2^m < 2q$. 
At this point, there are two possibilities: Either $2q=2^m+2$, i.e., $q=2^{m-1}+1$ or $2^m+2<2q$. 

\textbf{Case I:} $2^m+2<2q$. Then, $\gcd(q,2^m+2)=1$ and
\begin{equation*}\label{eqn cong3c}
\begin{split}
\binom{p-2}{2^m+1} &= \frac{p-2}{2} \cdots \frac{p-q}{q} \cdot \frac{p-q-1}{q+1}\cdots
\frac{p-2^{m}-1}{2^{m}+1} \cdot \frac{p-2^m-2}{1}\\
&= \frac{q^k u-2}{2} \cdots \frac{q^k u-q}{q} \cdot \frac{q^k u-q-1}{q+1}\cdots
\frac{q^k u-2^{m}-1}{2^{m}+1} \cdot (q^k u-2^m-2)\\
& \equiv (-1) \dots (-1) . (q^{k-1} u-1). (-1) \dots (-1) . (-2^m-2) \quad \mod \, \, \, q^k \quad \text{by \remref{rem1}}\\
& = (-1)^{2^m-1} (q^{k-1} u-1) . (-2^m-2) \\
& = (1-q^{k-1} u) . (-2^m-2).\\
\end{split}
\end{equation*}
Suppose $\binom{p-2}{2^m+1} \equiv -2^m-2 \quad \mod \, \, \, p$, then  $\binom{p-2}{2^m+1} \equiv -2^m-2 \quad \mod \, \, \, q^k$. Then, we would have
$(1-q^{k-1} u). (-2^m-2) \equiv  -2^m-2 \quad \mod \, \, \, q^k$. Since  $\gcd(q,2^{m}+2)=1$, $1-q^{k-1} u \equiv  1 \quad \mod \, \, \, q^k$,
i.e., $u \equiv 0 \quad \mod \, \, \, q$. However, this contradicts with $\gcd(q,u)=1$.

\textbf{Case II:} $q=2^{m-1}+1$. Then
\begin{equation*}\label{eqn cong3d}
\begin{split}
\binom{p-2}{2^{m-1}+1} &= \frac{p-2}{2} \cdots \frac{p-q+1}{q-1} \cdot \frac{p-q}{q} \cdot \frac{p-q-1}{1}\\
&= \frac{q^k u-2}{2} \cdots \frac{q^k u-q+1}{q-1} \cdot \frac{q^k u-q}{q} \cdot (q^k u-q-1)\\
& \equiv (-1) \dots (-1) . (q^{k-1} u-1). (-q-1) \quad \mod \, \, \, q^k \quad \text{by \remref{rem1}}\\
& = (-1)^{2^{m-1}-1} (q^{k-1} u-1) . (-2^{m-1}-2) \\
& = (1-q^{k-1} u) . (-2^{m-1}-2).\\
\end{split}
\end{equation*}
Suppose $\binom{p-2}{2^{m-1}+1} \equiv -2^{m-1}-2 \quad \mod \, \, \, p$, then  $\binom{p-2}{2^{m-1}+1} \equiv -2^{m-1}-2 \quad \mod \, \, \, q^k$. Then, we would have
$(1-q^{k-1} u). (-2^{m-1}-2) \equiv  -2^{m-1}-2 \quad \mod \, \, \, q^k$. Since  $q=2^{m-1}+1$ and $\gcd(q^k,q+1)=1$, we have $1-q^{k-1} u \equiv  1 \quad \mod \, \, \, q^k$,
i.e., $u \equiv 0 \quad \mod \, \, \, q$. However, this again contradicts with $\gcd(q,u)=1$.

That is, in both cases we proved that if $p$ is not a prime number, then  $\binom{p-2}{2^m+1}   \not \equiv -2^m-2 \quad \mod \, \, \, p \text{ for some $m \in \{ 1, \, 2, \, \dots, M \}$}.$

This completes the proof of the theorem.
\end{proof}

Since $2^3<\sqrt{241}<2^4$, $\binom{241-2}{2^1+1} \equiv -4$, $\binom{241-2}{2^2+1} \equiv -6$,  $\binom{241-2}{2^3+1} \equiv -10$ and  $\binom{241-2}{2^4+1} \equiv -18$  modulo $241$, we conclude that $241$ is a prime number.

Note that $2^4 < \sqrt{17^2}<2^5$ and $17=2^4+1$.
Although $\binom{17^2-2}{2^m+1} \equiv -2^m-2$ for $m \in \{ 1, \, 2, \, 3, \, M=5 \}$,   
$17^2$ fails to be a prime number as $\binom{17^2-2}{2^4+1} \equiv -1 \not \equiv -2^4-2$  modulo $17^2$.

Note that all the cases we consider obey the following general congruence for a prime number $p$, integers $ 1 \leq a \leq n$ and $0 \leq b \leq n-a$ \cite[Corollary 2.3]{C1}:
\begin{equation}\label{eqn cong general}
\begin{split}
\binom{p-a}{b} \equiv (-1)^b \binom{a+b-1}{b} \quad \mod \, \, \, p.
\end{split}
\end{equation}
However, the converse of this is does not hold in general. In this paper, we actually provide sufficient conditions for specific choices of $a$ and $b$ so that the converse becomes true.

Finally, we consider the sufficient sets of congruences involving the binomial coefficients of type $\binom{p-3}{i}$.
\begin{theorem}\label{thm p-3 main1} Let $p>4$ be an integer, and let $M$ be the smallest integer with $\sqrt{p} \leq 2^M \leq p-1$. Then,
$p$ is a prime number if and only if $\binom{p-3}{2^m-1}   \equiv -2^{m-1}(2^m+1) \quad \mod \, \, \, p$ for each $m \in \{ 1, \, 2, \, \dots, M \}$.
\end{theorem}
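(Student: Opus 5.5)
The forward direction is immediate from the general congruence \eqref{eqn cong general} with $a=3$ and $b=2^m-1$. That choice gives
$$\binom{p-3}{2^m-1}\equiv(-1)^{2^m-1}\binom{2^m+1}{2^m-1}=-\frac{(2^m+1)2^m}{2}=-2^{m-1}(2^m+1) \quad \mod \, \, \, p.$$
Alternatively, one can combine \thmref{thm p-2 mainb} with the identity below.

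For the converse with $p$ odd, I would mimic the proof of \thmref{thm p-2 maina}. Since $\binom{p-2}{2^m}=\frac{p-2}{2^m}\binom{p-3}{2^m-1}$, we have
$$2^m\binom{p-2}{2^m}=(p-2)\binom{p-3}{2^m-1}.$$
If the hypothesis holds, the right-hand side is congruent to $(-2)\cdot\bigl(-2^{m-1}(2^m+1)\bigr)=2^m(2^m+1)$ modulo $p$. As $p$ is odd, $2^m$ is invertible modulo $p$, so $\binom{p-2}{2^m}\equiv 2^m+1 \quad \mod \, \, \, p$ for every $m\in\{1,\dots,M\}$. Then \thmref{thm p-2 mainb} shows that $p$ is prime.

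A direct argument is also available whenever $p$ has an odd prime divisor $q\le\sqrt p$. Write $p=q^ku$ with $\gcd(q,u)=1$, and pick $m\le M$ with $2^{m-1}<q\le 2^m$. We have
$$\binom{p-3}{2^m-1}=\binom{p-1}{2^m+1}\cdot\frac{(2^m+1)2^m}{(p-1)(p-2)}.$$
The product expansion used in \thmref{thm p-1 main} applies here because $2^m+1<2q$, and it gives $\binom{p-1}{2^m+1}\equiv-(1-q^{k-1}u) \quad \mod \, \, \, q^k$. Since $p-1\equiv-1$ and $p-2\equiv-2$ modulo $q^k$, we get
$$\binom{p-3}{2^m-1}\equiv -(1-q^{k-1}u)\,2^{m-1}(2^m+1) \quad \mod \, \, \, q^k.$$
Now $\gcd\bigl(q,2^{m-1}(2^m+1)\bigr)=1$, because $q$ is odd, $q<2^m+1<2q$, and $q\ne 2^m+1$. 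Hence the assumed congruence forces $q\mid u$, which is a contradiction.

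The remaining case is $p$ even, and I expect it to be the main obstacle. The factor $p-2$ shares the factor $2$ with $p$, and $2^m$ is not invertible, so neither reduction above works. Here I would argue as in \thmref{thm p-1 main2} and \thmref{thm p-2 mainc}. Write $p=2a$; then $p>4$ gives $M\ge2$, and I would test $m=2$. The $m=2$ congruence reads $\binom{2a-3}{3}\equiv-10 \quad \mod \, \, \, 2a$, where
$$\binom{2a-3}{3}=\frac{(2a-3)(a-2)(2a-5)}{3}.$$
I would reduce this modulo $2a$, splitting according to $a$ modulo $3$ and the parity of $a$, in the same way $\binom{6a-1}{3}$ was handled earlier. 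In each class the aim is to show that the congruence forces a multiple of $a$ such as $ca$ with $0<c<2$, or a small constant, to vanish modulo $2a$. If some residue class survives $m=2$, I would use $m=3$ (available once $p>16$), or else fall back on an odd prime divisor of $p$ when one exists below $\sqrt p$. The case $p=2^s$, and the small values of $p$ excluded by $M$, may need a separate direct computation.
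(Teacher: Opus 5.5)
Your forward direction and your treatment of odd $p$ follow the paper's argument exactly: the identity $2^m\binom{p-2}{2^m}=(p-2)\binom{p-3}{2^m-1}$, cancelling the unit $2^m$, then invoking \thmref{thm p-2 mainb}. The gap is the even case, which the converse needs. You choose the same test $m=2$ as the paper, but you never reduce $\binom{p-3}{3}$ modulo $p$. Your contingency plans (passing to $m=3$, falling back on an odd prime divisor below $\sqrt p$, treating $p=2^s$ separately) show you have not established that $m=2$ rules out every even $p>4$. As written, the converse is unproved for even $p$.

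The missing computation is short and leaves no case over. Since $(p-3)(p-4)(p-5)=p^3-12p^2+47p-60$, we have
\[
\binom{p-3}{3}+10=\frac{p\,(p^2-12p+47)}{6},
\]
so $p$ divides the left side if and only if $6\mid p^2-12p+47$. For even $p$ this number is odd. Hence the $m=2$ congruence fails for every even $p>4$, and $m=2\le M$ because $p\ge 6$.

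More precisely, $\binom{p-3}{3}+10$ is congruent to $5p/6$ modulo $p$ when $3\mid p$, and to $p/2$ otherwise. This is what the paper records by writing $p=6a$, $6a+2$, $6a+4$, where the congruence reduces to $5a\equiv 0 \pmod{6a}$, $3a+1\equiv 0\pmod{6a+2}$ and $3a+2\equiv 0\pmod{6a+4}$ respectively. Only whether $3\mid p$ matters, so your extra split by the parity of $a$ is unnecessary.

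Your additional direct argument via an odd prime divisor $q\le\sqrt p$ is correct. It never uses the parity of $p$, so it also handles even $p$ with such a divisor. Once the $m=2$ computation is in place, though, it is not needed.
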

\begin{proof}
If $p$ is a prime number, then $\binom{p-3}{2^m-1} \equiv -\binom{2^m+1}{2} = -2^{m-1}(2^m+1) \quad \mod \, \, \, p$ 
for each $m \in \{ 1, \, 2, \, \dots, M \}$ by \eqref{eqn cong general}.

Since $\binom{p-2}{2^m}=\frac{p-2}{2^m}\binom{p-3}{2^m-1}$, we have 
\begin{equation}\label{eqn binom3}
\begin{split}
2^m \binom{p-2}{2^m}=(p-2)\binom{p-3}{2^m-1}.
\end{split}
\end{equation}

When $p>4$ and $\binom{p-3}{2^m-1}   \equiv -2^{m-1}(2^m+1) \quad \mod \, \, \, p$ for each $m \in \{ 1, \, 2, \, \dots, M \}$, we have
$2^m \binom{p-2}{2^m} \equiv 2^{m}(2^m+1)  \quad \mod \, \, \, p$ by \eqref{eqn binom3}. 
If $p$ is an odd integer, then this can be written as $\binom{p-2}{2^m} \equiv (2^m+1)  \quad \mod \, \, \, p$ for each $m \in \{ 1, \, 2, \, \dots, M \}$. Therefore, $p$ is a prime number by \thmref{thm p-2 mainb}.

If $p>4$ is an even integer, we can show that $\binom{p-3}{2^m-1}   \not \equiv -2^{m-1}(2^m+1) \quad \mod \, \, \, p$ for $m=2$, i.e., 
$\binom{p-3}{3}   \not \equiv -10 \quad \mod \, \, \, p$. Such $p$ is in one of the three forms: $6a$, $6a+2$ or $6a+4$ for some positive integer $a$.

If $p=6a$, $\binom{p-3}{3}  \equiv -10 \quad \mod \, \, \, p$ means that $5a \equiv 0 \quad \mod \, \, \, 6a$ which is not possible.

If $p=6a+2$, $\binom{p-3}{3}  \equiv -10 \quad \mod \, \, \, p$ means that $3a+1 \equiv 0 \quad \mod \, \, \, 6a+2$ which can not happen.

If $p=6a+4$, $\binom{p-3}{3}  \equiv -10 \quad \mod \, \, \, p$ means that $3a+2 \equiv 0 \quad \mod \, \, \, 6a+4$ which is contradictory.

This completes the proof.
\end{proof}

\begin{theorem}\label{thm p-3 main3} Let $p>5$ be an integer, and let $M$ be the smallest integer with $\sqrt{p} \leq 2^M \leq p-1$. Then,
$p$ is a prime number if and only if $\binom{p-3}{2^m+1}   \equiv -(2^{m-1}+1)(2^m+3) \quad \mod \, \, \, p$ for each $m \in \{ 1, \, 2, \, \dots, M \}$.
\end{theorem}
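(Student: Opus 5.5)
The plan mirrors the proof of \thmref{thm p-3 main1}: handle the forward direction with \eqref{eqn cong general}, and for the converse reduce to the already proven \thmref{thm p-2 mainc} through an identity between $\binom{p-3}{\cdot}$ and $\binom{p-2}{\cdot}$. The parity obstruction in that reduction is then dealt with directly.

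For the forward direction, suppose $p$ is prime. Taking $a=3$ and $b=2^m+1$ in \eqref{eqn cong general} gives
$$\binom{p-3}{2^m+1}\equiv -\binom{2^m+3}{2}=-\frac{(2^m+3)(2^m+2)}{2}=-(2^{m-1}+1)(2^m+3) \pmod p.$$

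For the converse, I would use
$$\binom{p-2}{2^m+1}=\frac{p-2}{2^m+1}\binom{p-3}{2^m},$$
which involves $\binom{p-3}{2^m}$ rather than $\binom{p-3}{2^m+1}$, so a shifted version is needed. The correct one is
$$(2^m+2)\binom{p-2}{2^m+2}=(p-2)\binom{p-3}{2^m+1}.$$
However, this leads to $\binom{p-2}{2^m+2}$, which no earlier theorem controls. A better route is Pascal's identity
$$\binom{p-2}{2^m+1}=\binom{p-3}{2^m}+\binom{p-3}{2^m+1},$$
but that requires $\binom{p-3}{2^m}$, which is also not controlled.

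So I would instead argue directly as in \thmref{thm p-1 main}. First dispose of even $p$ and of $p$ divisible by $3$ by a direct computation at $m=1$, where $\binom{p-3}{3}\equiv -10$ must fail. Here $\binom{p-3}{3}=(p-3)(p-4)(p-5)/6$, and a case split over $p \bmod 6$, as in \thmref{thm p-3 main1}, should rule each case out.

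For composite $p$ with $\gcd(p,6)=1$, take a prime $q\le\sqrt p$, $q\ge5$, write $p=q^ku$ with $\gcd(q,u)=1$, and choose $m\le M$ with $2^{m-1}<q\le 2^m<2q$. Write
$$\binom{p-3}{2^m+1}=\frac{(p-3)(p-4)\cdots(p-2^m-3)}{(2^m+1)!}.$$
The numerator factors $p-j$ with $j\ne q$ are $\equiv -j$ modulo $q^k$, and these cancel against the denominator except for a boundary factor. The factor $p-q$ over $q$ contributes $q^{k-1}u-1$. The range of $j$ is $3,\dots,2^m+3$, which must contain $q$ and can contain no other multiple of $q$, since $2q>2^m+3$ unless $q=2^{m-1}+1$, and that equality case needs separate treatment as in \thmref{thm p-2 mainc}. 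Matching the numerator against the denominator gives
$$\binom{p-3}{2^m+1}\equiv (1-q^{k-1}u)\cdot\bigl(-(2^{m-1}+1)(2^m+3)\bigr) \pmod{q^k}.$$
The hypothesized congruence then forces $q^{k-1}u(2^{m-1}+1)(2^m+3)\equiv0 \pmod{q^k}$. If $q$ does not divide $(2^{m-1}+1)(2^m+3)$, this gives $q\mid u$, a contradiction.

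The main obstacle is the possibility that $q$ divides $2^{m-1}+1$ or $2^m+3$. Since $2^{m-1}<q\le 2^m$, the only options are $q=2^{m-1}+1$ and $2q=2^m+3$. The latter is impossible because $2^m+3$ is odd. The case $q=2^{m-1}+1$ (for instance $q=5,17$) is exactly the exceptional case where a second multiple $2q$ falls inside the range of $j$. There I would follow Case II of \thmref{thm p-2 mainc}: use the index $m-1$ instead of $m$, which is allowed since $2^{m-1}<q\le 2^{m-1}+1$ and $m-1\ge1$. With index $m-1$, the range of $j$ is $3,\dots,2^{m-1}+3=q+2$, and it contains only the multiple $q$ of $q$. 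The relevant residue becomes $\binom{2^{m-1}+3}{2}=(q+2)(q+1)/2$, which is coprime to $q$ because $q\ge5$. This gives the contradiction again.
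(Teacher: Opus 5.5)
Your proposal is correct and follows essentially the paper's proof: the forward direction comes from \eqref{eqn cong general}, and the converse uses the mod-$q^k$ product computation at the index $m$ with $2^{m-1}<q\le 2^m$, switches to index $m-1$ when $q=2^{m-1}+1$, and settles even $p$ by checking directly that $\binom{p-3}{3}\not\equiv -10 \pmod p$. The only deviation is that you also handle $3\mid p$ by this direct check rather than through the paper's case $q=2^{m-1}+1$ with $q=3$; the one residue not already computed in \thmref{thm p-3 main1}, namely $p=6a+3$, which you left as ``should,'' does work, since $\binom{6a}{3}\equiv 2a-9 \pmod{6a+3}$ would force $2a+1\equiv 0 \pmod{6a+3}$, which is impossible.
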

\begin{proof}
If $p$ is a prime number, then $\binom{p-3}{2^m+1} \equiv -\binom{2^m+3}{2^m+1} = -(2^{m-1}+1)(2^m+3) \quad \mod \, \, \, p$ 
for each $m \in \{ 1, \, 2, \, \dots, M \}$ by \eqref{eqn cong general}.

If $p$ is a composite integer, then 
it has a prime divisor $q \leq \sqrt{p}$ by \propref{prop basic}. Then, $p=q^k u$ for some integers $k \geq 1$ and
$u \geq 1$ with $\gcd(q,u)=1$. Since $q \leq \sqrt{p}$, there is a positive integer $m \leq M$ such that $2^{m-1} < q \leq 2^m < 2q$. 
We consider the following three cases: $q=2$, $q=2^{m-1}+1>2$ and $q>2^{m-1}+1$.

\textbf{Case I:} $q>2^{m-1}+1$. Then,
\begin{equation*}\label{eqn cong p-3 maind}
\begin{split}
\binom{p-3}{2^m+1} &= \frac{p-3}{3} \cdots \frac{p-q}{q} \cdots
\frac{p-2^{m}-1}{2^{m}+1} \cdot \frac{p-2^m-2}{2}  \cdot \frac{p-2^m-3}{1}\\
&= \frac{q^k u-3}{3} \cdots \frac{q^k u-q}{q}  \cdots
\frac{q^k u-2^{m}-1}{2^{m}+1} \cdot \frac{q^k u-2^m-2}{2} \cdot \frac{q^k u-2^m-3}{1}\\
& \equiv (-1) \dots (-1) . (q^{k-1} u-1). (-1) \dots  (-1) .(-2^{m-1}-1)  (-2^m-3) \quad \mod \, \, \, q^k \\
\end{split}
\end{equation*}
by \remref{rem1}. Thus,
\begin{equation*}\label{eqn cong p-3 maine}
\begin{split}
\binom{p-3}{2^m+1} & \equiv (-1)^{2^m-2} (q^{k-1} u-1) (-2^{m-1}-1)  (-2^m-3) \quad \mod \, \, \, q^k \\
& = (q^{k-1} u-1)  (2^{m-1}+1)(2^m+3).\\
\end{split}
\end{equation*}

Suppose $\binom{p-3}{2^{m}+1} \equiv -(2^{m-1}+1)(2^m+3) \quad \mod \, \, \, p$, then  $\binom{p-3}{2^{m}+1} \equiv -(2^{m-1}+1)(2^m+3) \quad \mod \, \, \, q^k$. Then, we would have
$(q^{k-1} u-1). (2^{m-1}+1) (2^m+3) \equiv  -(2^{m-1}+1)(2^m+3) \quad \mod \, \, \, q^k$. Since
both $2^{m-1}+1$ and $2^{m}+3$ are relatively prime to $q$, 
we have $q^{k-1} u-1 \equiv  -1 \quad \mod \, \, \, q^k$,
i.e., $u \equiv 0 \quad \mod \, \, \, q$. However, this is not the case as $\gcd(q,u)=1$.

\textbf{Case II:} $q=2^{m-1}+1$ and $m>1$. Then
\begin{equation*}\label{eqn cong p-3 mainf}
\begin{split}
&\binom{p-3}{2^{m-1}+1} = \frac{p-3}{3} \cdots \frac{p-q+1}{q-1} \cdot \frac{p-q}{q} \cdot \frac{p-q-1}{2} \cdot \frac{p-q-2}{1}\\
&= \frac{q^k u-3}{3} \cdots \frac{q^k u-q+1}{q-1} \cdot \frac{q^k u-q}{q} \cdot \frac{(q^k u-q-1)}{2} (q^k u-q-2) \\
& \equiv (-1) \dots (-1) . (q^{k-1} u-1). (-2^{m-2}-1)(-2^{m-1}-3) \quad \mod \, \, \, q^k \quad \text{by \remref{rem1}}\\
& = (-1)^{2^{m-1}-2} (q^{k-1} u-1) .  (-2^{m-2}-1)(-2^{m-1}-3)  \\
& = (q^{k-1} u-1) . (2^{m-2}+1)(2^{m-1}+3).\\
\end{split}
\end{equation*}
Suppose $\binom{p-3}{2^{m-1}+1} \equiv -(2^{m-2}+1)(2^{m-1}+3) \quad \mod \, \, \, p$, then  $\binom{p-3}{2^{m-1}+1} \equiv -(2^{m-2}+1)(2^{m-1}+3) \quad \mod \, \, \, q^k$. Then, we would have
$ (q^{k-1} u-1) . (2^{m-2}+1)(2^{m-1}+3) \equiv  -(2^{m-2}+1)(2^{m-1}+3) \quad \mod \, \, \, q^k$. Since  $q=2^{m-1}+1$ and 
$\gcd(q^k,(2^{m-2}+1)(2^{m-1}+3))=1$, we have $q^{k-1} u-1 \equiv  -1 \quad \mod \, \, \, q^k$,
i.e., $u \equiv 0 \quad \mod \, \, \, q$. However, this again contradicts with $\gcd(q,u)=1$.

\textbf{Case III:} $q=2$. Then $p$ is even, so $p=6a$, $p=6a+2$ or $p=6a+4$ for some positive integer $a$.
In each of these cases, considering the congruence $\binom{p-3}{2^m+1}   \equiv -(2^{m-1}+1)(2^m+3) \quad \mod \, \, \, p$ for $m=1$ leads to the following contradictions:

$\binom{6a-3}{3}   \equiv -10 \quad \mod \, \, \, 6a$, which reduces to $5a \equiv 0 \quad \mod \, \, \, 6a$.

$\binom{6a-1}{3}   \equiv -10 \quad \mod \, \, \, 6a+2$, which leads to $3a+1 \equiv 0 \quad \mod \, \, \, 6a+2$.

$\binom{6a+1}{3}   \equiv -10 \quad \mod \, \, \, 6a+4$, which results in $3a+2 \equiv 0 \quad \mod \, \, \, 6a+4$.

In each cases, we showed that if $\binom{p-3}{2^m+1}   \equiv -(2^{m-1}+1)(2^m+3) \quad \mod \, \, \, p$ for each $m \in \{ 1, \, 2, \, \dots, M \}$, then $p$ must be a prime number.

This completes  the proof.
\end{proof}

\begin{theorem}\label{thm p-3 main2} Let $p>5$ be an odd integer, and let $M$ be the smallest integer with $\sqrt{p} \leq 2^M \leq p-1$. Suppose that if $\gcd(p,2^m+1)=1$ for each integer $m \in \{ 1, \, 2, \, \dots, M \}$. Then,
$p$ is a prime number if and only if $\binom{p-3}{2^m}   \equiv (2^{m-1}+1)(2^m+1) \quad \mod \, \, \, p$ for each $m \in \{ 1, \, 2, \, \dots, M \}$.
\end{theorem}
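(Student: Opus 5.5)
The plan is to deduce this result from \thmref{thm p-2 mainc}, the same way \thmref{thm p-2 maina} was deduced from \thmref{thm p-1 main} and \thmref{thm p-3 main1} from \thmref{thm p-2 mainb}. The link between $\binom{p-3}{2^m}$ and $\binom{p-2}{2^m+1}$ is the absorption identity $\binom{p-2}{2^m+1}=\frac{p-2}{2^m+1}\binom{p-3}{2^m}$, which gives
\begin{equation*}
(2^m+1)\binom{p-2}{2^m+1}=(p-2)\binom{p-3}{2^m}.
\end{equation*}

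\textbf{Forward direction.} If $p$ is prime, apply \eqref{eqn cong general} with $a=3$ and $b=2^m$. This gives
\begin{equation*}
\binom{p-3}{2^m}\equiv (-1)^{2^m}\binom{2^m+2}{2^m}=\binom{2^m+2}{2}=\frac{(2^m+2)(2^m+1)}{2}=(2^{m-1}+1)(2^m+1) \quad \mod \, \, \, p,
\end{equation*}
which is the required congruence for each $m\in\{1,\dots,M\}$. The constraint $b\le p-a$ holds because $2^m\le 2^M\le p-1$ and $p>5$. I would also double-check the edge case $m=M$, where $2^M$ may be as large as $p-3$.

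\textbf{Converse.} Assume the congruences hold for every $m\in\{1,\dots,M\}$. Substituting into the identity above gives
\begin{equation*}
(2^m+1)\binom{p-2}{2^m+1}\equiv -2(2^{m-1}+1)(2^m+1)=-(2^m+2)(2^m+1) \quad \mod \, \, \, p.
\end{equation*}
Next I cancel the factor $2^m+1$. This is exactly where the hypothesis $\gcd(p,2^m+1)=1$ is needed: it makes $2^m+1$ invertible modulo $p$. After cancelling, $\binom{p-2}{2^m+1}\equiv -2^m-2 \pmod p$ for each $m\in\{1,\dots,M\}$. Since $p>5$ is odd, \thmref{thm p-2 mainc} then shows that $p$ is prime.

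\textbf{Main obstacle.} The only delicate step is the cancellation of $2^m+1$, and the extra gcd assumption in the statement is there precisely to justify it. Without it, for example when $3\mid p$ and $m=1$, the cancellation fails and one would need a separate residue-class argument like those in \thmref{thm p-2 mainc}. The remaining steps are routine checks: that $p-2\equiv -2 \pmod p$, and that the range conditions of \eqref{eqn cong general} and the hypotheses of \thmref{thm p-2 mainc} are met.
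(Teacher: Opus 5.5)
Your proposal is correct and follows the paper's proof essentially step for step. The forward direction comes from \eqref{eqn cong general} with $a=3$, $b=2^m$; the converse uses $(2^m+1)\binom{p-2}{2^m+1}=(p-2)\binom{p-3}{2^m}$, cancels $2^m+1$ via $\gcd(p,2^m+1)=1$, and concludes with \thmref{thm p-2 mainc}, and the edge case you flag is harmless since minimality of $M$ gives $2^M<2\sqrt{p}\le p-3$ for $p\ge 9$ while $2^M=4=p-3$ for $p=7$.
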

\begin{proof}
If $p$ is a prime number, then $\binom{p-3}{2^m} \equiv \binom{2^m+2}{2^m} = (2^{m-1}+1)(2^m+1) \quad \mod \, \, \, p$ 
for each $m \in \{ 1, \, 2, \, \dots, M \}$ by \eqref{eqn cong general}.

Since $\binom{p-2}{2^m+1}=\frac{p-2}{2^m+1}\binom{p-3}{2^m}$, we have 
\begin{equation}\label{eqn binom4}
\begin{split}
(2^m+1) \binom{p-2}{2^m+1}=(p-2)\binom{p-3}{2^m}.
\end{split}
\end{equation}

When $p>5$ and $\binom{p-3}{2^m}   \equiv (2^{m-1}+1)(2^m+1) \quad \mod \, \, \, p$ for each $m \in \{ 1, \, 2, \, \dots, M \}$, we have
$(2^m+1) \binom{p-2}{2^m+1} \equiv -(2^{m}+2)(2^m+1)  \quad \mod \, \, \, p$ by \eqref{eqn binom4}. 
For each $m \in \{ 1, \, 2, \, \dots, M \}$,
if $\gcd(p,2^m+1)=1$, then we obtain $\binom{p-2}{2^m+1} \equiv -(2^m+2)  \quad \mod \, \, \, p$ . 
Moreover, if $p$ is an odd integer, $p$ is a prime number by \thmref{thm p-2 mainc}.

\end{proof}

There are more hypothesis in \thmref{thm p-3 main2} than the other cases we studied. Next, we give examples showing that these hypothesis are essential.

Note that $17=2^4+1$. Although $2^4<\sqrt{17^2}<2^5$ and $\binom{17^2-3}{2^k} \equiv (2^{k-1}+1)(2^k+1)$  modulo $17^2$ for each $k=1, \, 2, \, 3, \, 4, \, 5$, we can not conclude that $17^2$ is a prime as $\gcd(17^2,2^4+1) \neq 1$.

We observe similar behavior for known primes of the form $2^{2^k}+1$. Such primes are called Fermat's primes. Namely, we have
$\binom{p^2-3}{2^m} \equiv (2^{m-1}+1)(2^m+1)$  modulo $p^2$ for each $m \in \{ 1, \, 2, \, \dots, M \}$ and $p \in 
\{ 3, 5, 17, 257, 65537  \}$. The largest known Fermat's prime is $65537$. Based on our computations we conjecture that the same holds for all Fermat's primes. 
In cases like these, we need to increase the number of congruence conditions to ensure the primality of $p$. For example,

$\binom{5^2-3}{2^k} \equiv (2^{k-1}+1)(2^k+1)$  modulo $5^2$ fails for $k=4$,

$\binom{17^2-3}{2^k} \equiv (2^{k-1}+1)(2^k+1)$  modulo $17^2$ fails for $k=6$, 

$\binom{257^2-3}{2^k} \equiv (2^{k-1}+1)(2^k+1)$  modulo $257^2$ fails for $k=10$, and

$\binom{65537^2-3}{2^k} \equiv (2^{k-1}+1)(2^k+1)$  modulo $65537^2$ fails for $k=18$.

As the value of the integer $a$ gets bigger in \eqref{eqn cong general}, the number of $b$ giving binomial congruences 
to guarantee the primality of $p$ is expected to grow. In the examples above, we observed that when $a=3$, we need $\lceil \frac{1}{2}\log_{2} n \rceil +1$ number of $b$ values of the form $2^m$.

\end{document}